\newfont{\footsc}{cmcsc10 at 8truept}
\newfont{\footbf}{cmbx10 at 8truept}
\newfont{\footrm}{cmr10 at 10truept}
\renewcommand{\ps@plain}{%
\renewcommand{\@oddfoot}{\footsc Abacus-histories and creation operators%
  \hfil\footrm\thepage}}
\theoremstyle{plain}
\newtheorem{theorem}{Theorem}
\theoremstyle{definition}
\newtheorem{example}[theorem]{Example}
\newtheorem{remark}[theorem]{Remark}
\DeclareMathOperator{\Bop}{\mathbb{B}}
\DeclareMathOperator{\Cop}{\mathbb{C}}
\DeclareMathOperator{\Hop}{\mathbb{H}} 
\DeclareMathOperator{\Sop}{\mathbb{S}}
\newcommand{\Z}{\mathbb{Z}}
\newcommand{\Q}{\mathbb{Q}}
\newcommand{\M}[1]{M_{#1}}
\newcommand{\Sst}{\mathrm{S}} 
\newcommand{\W}{\mathrm{W}} 
\newcommand{\scprod}[2]{\langle #1,#2\rangle}
\newcommand{\CO}{\mathscr{C}_{\omega}} 
\DeclareMathOperator{\charge}{charge} 
\DeclareMathOperator{\HS}{HS} 
\DeclareMathOperator{\id}{id} 
\DeclareMathOperator{\VS}{VS} 
\newlength{\cellsize}
\newcommand\tableau[1]{
\vcenter{
\let\\=\cr
\baselineskip=-16000pt
\lineskiplimit=16000pt
\lineskip=0pt
\halign{&\tableaucell{##}\cr#1\crcr}}}
\newcommand{\tableaucell}[1]{{%
\def \arg{#1}\def \void{}%
\ifx \void \arg
\vbox to \cellsize{\vfil \hrule width \cellsize height 0pt}%
\else
\unitlength=\cellsize
\begin{picture}(1,1)
\put(0,0){\makebox(1,1){$#1$}}
\put(0,0){\line(1,0){1}}
\put(0,1){\line(1,0){1}}
\put(0,0){\line(0,1){1}}
\put(1,0){\line(0,1){1}}
\end{picture}%
\fi}}
\begin{document} 

\title{Abacus-histories and the combinatorics of creation~operators} 
\date{\today}

\author{Nicholas A. 
Loehr\thanks{This work was supported by a grant from the Simons Foundation/SFARI
(\#633564, N.A.L.).  }
 \\ Dept. of Mathematics 
 \\ Virginia Tech 
 \\ Blacksburg, VA 24061-0123 
 \\ \texttt{nloehr@vt.edu}
\and
Gregory S. Warrington\thanks{This work was supported by a grant 
from the Simons Foundation/SFARI (\#429570, G.S.W.).  }
\\ Dept. of Mathematics and Statistics
\\ University of Vermont
\\ Burlington, VT 05401 
\\ \texttt{gregory.warrington@uvm.edu}}

\maketitle 
\begin{abstract}
Creation operators act on symmetric functions to build
Schur functions, Hall--Littlewood polynomials, and related symmetric
functions one row at a time. Haglund, Morse, Zabrocki, and others
have studied more general symmetric functions $H_{\alpha}$, $C_{\alpha}$, and 
$B_{\alpha}$ obtained by applying any sequence of creation operators to $1$.  
We develop new combinatorial models for the Schur expansions of these
and related symmetric functions using objects called abacus-histories. These 
formulas arise by chaining together smaller abacus-histories that encode the
effect of an individual creation operator on a given Schur function.
We give a similar treatment for operators such as multiplication
by $h_m$, $h_m^{\perp}$, $\omega$, etc., which serve as building blocks to 
construct the creation operators. We use involutions on abacus-histories to 
give bijective proofs of properties of the Bernstein creation operator
and Hall--Littlewood polynomials indexed by three-row partitions.  
\end{abstract}

\noindent\textbf{Keywords:} 
 Hall--Littlewood polynomials; Bernstein operators; Jing operators;
 HMZ operators; creation operators; Schur functions; semistandard tableaux;
 abaci; abacus histories; lattice paths. 


\section{Introduction}
\label{sec:intro}

Creation operators are an important technical tool in the study
of the Schur polynomials $s_{\mu}$, 
the Hall--Littlewood polynomials $H_{\mu}$, and related symmetric functions.
Let $\Lambda$ denote the ring of symmetric functions with coefficients
in the field $F=\Q(q)$, where $q$ is a formal variable.
For each integer $b$, the \emph{Bernstein creation operator}
$\Sop_b$ is an $F$-linear operator on $\Lambda$. These operators ``create''
the Schur symmetric functions, one row at a time, in the following sense.
Given any integer partition $\mu=(\mu_1\geq\mu_2\geq\cdots\geq \mu_L)$,
we have
\begin{equation}\label{eq:s-create}
 s_{\mu} =\Sop_{\mu_1}\circ \Sop_{\mu_2}\circ\cdots\circ \Sop_{\mu_L}(1). 
\end{equation}
Similarly, the \emph{Jing creation operators}~\cite{Jing91} are $F$-linear 
operators $\Hop_b$ on $\Lambda$ that create the Hall--Littlewood symmetric
functions $H_{\mu}$~\cite[Chpt. III]{Mac}. 
Specifically, for any integer partition $\mu$,
\begin{equation}\label{eq:HL-create}
 H_{\mu} =\Hop_{\mu_1}\circ \Hop_{\mu_2}\circ\cdots\circ \Hop_{\mu_L}(1). 
\end{equation}

Garsia, Haglund, Morse, Xin, and Zabrocki~\cite{GXZ11,HMZ12} 
have studied variations
of the Jing creation operators, denoted $\Cop_b$ and $\Bop_b$, that play
a crucial role in the study of $q,t$-Catalan numbers, diagonal harmonics
modules, and the Bergeron--Garsia nabla operator. Replacing each
$\Hop_{\mu_i}$ in~\eqref{eq:HL-create} by $\Cop_{\mu_i}$ or $\Bop_{\mu_i}$
produces symmetric functions that are closely related to Hall--Littlewood
polynomials. More generally, we can consider operators indexed by
arbitrary compositions rather than restricting to partitions. 
For any finite sequence of integers
$\alpha=(\alpha_1,\alpha_2,\ldots,\alpha_L)$, we can define symmetric
functions
\begin{align}
\label{eq:Salpha}
 S_{\alpha}&=\Sop_{\alpha_1}\circ\Sop_{\alpha_2}\circ\cdots\circ\Sop_{\alpha_L}(1);
\\ \label{eq:Halpha}
H_{\alpha}&=\Hop_{\alpha_1}\circ\Hop_{\alpha_2}\circ\cdots\circ\Hop_{\alpha_L}(1);
\\ \label{eq:Calpha}
C_{\alpha}&=\Cop_{\alpha_1}\circ\Cop_{\alpha_2}\circ\cdots\circ\Cop_{\alpha_L}(1);
\\ \label{eq:Balpha}
B_{\alpha}&=\Bop_{\alpha_L}\circ\Bop_{\alpha_{L-1}}\circ\cdots\circ
\Bop_{\alpha_1}(1).
\end{align}
On one hand, as we explain in Section~\ref{subsec:defineS},
each $S_{\alpha}$ is either $0$ or $\pm s_{\mu}$ for some
partition $\mu$, where $\mu$ can be found from $\alpha$ by repeated use
of the \emph{commutation rule}
\begin{equation}\label{eq:S-commute}
 \Sop_m\circ \Sop_n=-\Sop_{n-1}\circ\Sop_{m+1}\qquad(m,n\in\Z).
\end{equation} 
On the other hand, 
the $H_{\alpha}$, $C_{\alpha}$, and $B_{\alpha}$ are more complicated
symmetric functions that may be regarded as generalized
Hall--Littlewood polynomials. For general $\alpha$, the Schur coefficients
of these symmetric functions are polynomials in $q$ 
(possibly multiplied by a fixed negative power of $q$)
containing a mixture of positive and negative coefficients. 

The primary goal of this paper is to develop explicit combinatorial
formulas for the Schur expansions of $H_{\alpha}$, $C_{\alpha}$,
and $B_{\alpha}$ based on signed, weighted collections of non-intersecting
lattice paths called \emph{abacus-histories}. Along the way, we develop
concrete formulas giving the Schur expansion of the image of any Schur
function under a single operator $\Sop_b$, $\Hop_b$, $\Cop_b$, $\Bop_b$, 
or any finite composition of such operators. We also give a similar 
treatment for some simpler operators such as $\omega$, multiplication by
$h_b$, $h_b^{\perp}$, etc., which serve as building blocks for constructing
the more elaborate creation operators.  

Some related work appears 
in a paper by Jeff Remmel and Meesue Yoo~\cite{RemYoo}. Our
approach features two key innovations leading to new and explicit 
combinatorial formulas. First, we use \emph{abacus diagrams} rather
than \emph{Ferrers diagrams} as a means of visualizing the indexing
partition $\mu$ for a Schur function $s_{\mu}$. This lets us record
a particular Schur coefficient using a one-dimensional picture instead
of a two-dimensional picture. Second, we utilize the second dimension
of our picture to show the \emph{evolution of the abacus over time}
as various operators are applied to our initial Schur function. We thereby
generate collections of non-intersecting lattice paths (abacus-histories)
that represent the individual terms in the Schur expansion of the
desired symmetric function.  In some instances,
we can define involutions on abacus-histories that cancel out negative
objects, thereby proving Schur-positivity or related identities.

The rest of this paper is organized as follows.
Section~\ref{sec:alg-create} reviews the needed background on symmetric 
functions and covers definitions and algebraic properties
of various creation operators.  
Section~\ref{sec:comb-create} develops combinatorial versions of the creation
operators, showing how to implement each operator by acting on an abacus
diagram for one or two time steps. We use this combinatorics to
reprove (from first principles) some creation operator identities 
such as~\eqref{eq:s-create} and~\eqref{eq:S-commute}.  
Section~\ref{sec:BCH} iterates our constructions to develop abacus-history
formulas for the Schur expansions of $H_{\alpha}$, $C_{\alpha}$, $B_{\alpha}$,
and related symmetric functions.
As a sample application of this technology,
Section~\ref{subsec:HLpoly} gives an elementary combinatorial proof of 
the Schur-positivity of three-row Hall--Littlewood polynomials,
leading to a simple formula for the Schur coefficients of these objects.
We conclude by presenting some open problems and directions for further work.

\section{Algebraic Development of Creation Operators}
\label{sec:alg-create}

We assume readers are familiar with basic background material on symmetric
functions, including definitions and facts concerning integer partitions,
the elementary symmetric functions $e_k$,
the complete homogeneous symmetric functions $h_k$,
the Schur symmetric functions $s_{\mu}$,
the involution $\omega$, and the Hall scalar product $\scprod{\cdot}{\cdot}$
on $\Lambda$.  In particular, the Schur functions $s_{\mu}$ (with $\mu$ 
ranging over all integer partitions) form an orthonormal basis of $\Lambda$ 
relative to the Hall scalar product, and $\omega$ is an involution,
ring isomorphism, and isometry on $\Lambda$ sending each 
$s_{\mu}$ to $s_{\mu'}$.  (See standard texts on symmetric functions
such as~\cite{loehr-comb,Mac,sagan} for more information.)

Our initial definitions of the creation operators 
(following~\cite{GXZ11,HMZ12})
utilize \emph{plethystic notation}, but readers need not have any
detailed prior knowledge of plethystic notation to understand this paper.
In fact, one of our goals here is to offer an alternative, highly
concrete and combinatorial treatment of creation operators to complement the
plethystic computations that appear in much of the existing
literature on this topic. Thus, each plethystic definition is immediately
followed by an equivalent algebraic definition not using plethystic
notation.  Familiarity with plethystic notation is required in only 
one (optional) section
that proves the equivalence of the two definitions. The paper~\cite{expose}
has a detailed exposition of plethystic notation containing all facts needed 
here.  Later in the paper, we develop completely combinatorial definitions
of creation operators (and related operators) in terms of abacus-histories.

\subsection{Multiplication Operators and their Adjoints}
\label{subsec:mult-adjoint}

Recall that $F$ is the field $\Q(q)$, and $\Lambda$ is the $F$-algebra
of symmetric functions with coefficients in $F$.
For any symmetric function $f\in\Lambda$, define the linear operator 
$\M{f}:\Lambda\rightarrow\Lambda$ to be \emph{multiplication by $f$}:
\begin{equation}\label{eq:mult-by-f}
 \M{f}(P)=fP \qquad(P\in\Lambda).
\end{equation}
We frequently take $f$ to be
$h_c$ (the complete homogeneous symmetric function) or
$e_c$ (the elementary symmetric function). 

The Pieri Rules~\cite[Sec. 9.11]{loehr-comb}
 show how $\M{h_c}$ and $\M{e_c}$ act on the Schur basis. 
First, let $\HS(c)$ be the set of all skew shapes $\lambda/\mu$
consisting of a horizontal strip of $c$ cells. For all partitions $\mu$,
\begin{equation}\label{eq:pieri-h}
 \M{h_c}(s_{\mu})=h_cs_{\mu} 
=\sum_{\lambda:\ \lambda/\mu\in\HS(c)} s_{\lambda}.
\end{equation}
Pictorially, we apply $\M{h_c}$ to $s_{\mu}$
by adding a horizontal strip of size $c$ to the Ferrers diagram of $\mu$
in all possible ways and summing the Schur functions indexed by the
new diagrams.

Second, let $\VS(c)$ be the set of all skew shapes $\lambda/\mu$
consisting of a vertical strip of $c$ cells. For all partitions $\mu$,
\begin{equation}\label{eq:pieri-e}
 \M{e_c}(s_{\mu})=e_cs_{\mu} 
=\sum_{\lambda:\ \lambda/\mu\in\VS(c)} s_{\lambda}.
\end{equation}
This time, we compute $\M{e_c}(s_{\mu})$ 
by adding a vertical strip of size $c$ to the diagram of $\mu$
in all possible ways and summing the resulting Schur functions.

For any linear operator $G$ on $\Lambda$, let $G^{\perp}$ denote
the operator on $\Lambda$ that is \emph{adjoint} to $G$ relative
to the Hall scalar product. So, $G^{\perp}$ is the unique linear map
on $\Lambda$ satisfying 
\begin{equation}\label{eq:adjoint-def}
 \scprod{G^{\perp}(P)}{Q}=\scprod{P}{G(Q)}
\quad\mbox{ for all $P,Q\in\Lambda$.}
\end{equation}
When $G$ is a multiplication operator $\M{f}$, we define
$f^{\perp}=(\M{f})^{\perp}$ for brevity. Thus,
\begin{equation}\label{eq:fperp}
 \scprod{f^{\perp}(P)}{Q}=\scprod{P}{fQ}
\quad\mbox{ for all $f,P,Q\in\Lambda$.} 
\end{equation} 
We mostly use $h_c^{\perp}$ and $e_c^{\perp}$ acting on the Schur basis.
Since the Schur basis is orthonormal relative to the Hall scalar product,
it follows from~\eqref{eq:fperp} and~\eqref{eq:pieri-h} that
\begin{equation}\label{eq:hperp}
 h_c^{\perp}(s_{\mu})=\sum_{\nu:\ \mu/\nu\in\HS(c)} s_{\nu}.
\end{equation}
%
In other words, $h_c^{\perp}$ acts on $s_{\mu}$ by \emph{removing}
a horizontal $c$-strip from the Ferrers diagram of $\mu$ in all possible ways
and summing the resulting Schur functions. Similarly,
\begin{equation}\label{eq:eperp}
 e_c^{\perp}(s_{\mu})=\sum_{\nu:\ \mu/\nu\in\VS(c)} s_{\nu}.
\end{equation} 
So $e_c^{\perp}$ acts on $s_{\mu}$ by removing
a vertical $c$-strip from the Ferrers diagram of $\mu$ in all possible ways
and summing the resulting Schur functions. 

As a convention, when $c$ is a negative integer, we define
the operators $\M{h_c}$, $\M{e_c}$, $h_c^{\perp}$, and $e_c^{\perp}$
to be the zero operator.

\subsection{Bernstein's Creation Operators $\Sop_m$}
\label{subsec:defineS} 

As in~\cite[pg. 834]{HMZ12},
we give a plethystic formula defining the Bernstein creation operators 
$\Sop_m$.  More information on these operators (which can be combined
into a single operator denoted $\Sop$ or $\Gamma(z)$) 
appears in earlier works by Thibon et al.~\cite{SchThi,SchThiWyb,Thibon}.
For any integer $m$ and any symmetric function $P$, set
\begin{equation}\label{eq:defS-pleth}
 \Sop_m(P)=\left.\left\{P\left[X-\frac{1}{z}\right]\sum_{k=0}^{\infty}
 h_kz^k\right\}\right|_{z^m}. 
\end{equation}
To explain this formula briefly: we first compute $P[X-(1/z)]$ by expressing
$P$ (uniquely) as a polynomial in the power-sum symmetric functions $p_n$
with coefficients in $F$, then replacing each $p_n$ by $p_n-(1/z^n)$.
Next we multiply by the formal power series $\sum_{k\geq 0} h_kz^k$
to obtain a formal Laurent series in $z$ with coefficients in $\Lambda$.
Taking the coefficient of $z^m$ in this series gives us $\Sop_m(P)$.

Here is an equivalent algebraic definition of $\Sop_m$ not using 
plethystic notation:
\begin{equation}\label{eq:defS-alg}
 \Sop_m=\sum_{c=0}^{\infty} (-1)^c \M{h_{m+c}}\circ e_c^{\perp}.
\end{equation}
(This definition appears in~\cite[Ex. 29, pp. 95--97]{Mac},
but Macdonald uses the notation $B_m$ for our $\Sop_m$. 
We use the notation~$\Sop_m$ from~\cite{HMZ12}
to avoid confusion with the operator $\Bop_m$ below.)
We prove the equivalence of definitions~\eqref{eq:defS-pleth} 
and~\eqref{eq:defS-alg} in Section~\ref{sec:pleth-prf}.

By combining the Pieri formulas and dual 
Pieri formulas discussed above, we can give a combinatorial prescription
for computing $\Sop_m(s_{\mu})$ based on Ferrers diagrams.
Starting with the diagram of $\mu$, do the following steps in all
possible ways. First choose a nonnegative integer $c$.
Then remove a vertical strip of $c$ cells from $\mu$ to get some shape
$\nu$. Then add a horizontal strip of $m+c$ cells to $\nu$ to
get a new shape $\lambda$. Record $(-1)^c s_{\lambda}$ as one
of the terms in $\Sop_m(s_{\mu})$.

Now, there is a much simpler way of computing
$\Sop_m(s_{\mu})$ based on formulas~\eqref{eq:s-create},
\eqref{eq:Salpha}, and~\eqref{eq:S-commute}. Given a partition 
$\mu=(\mu_1\geq\cdots\geq\mu_L)$ and integer $m$, start 
with the list $(m,\mu_1,\ldots,\mu_L)$. If $m\geq\mu_1$,
then output the Schur function indexed by this new list.
Otherwise, repeatedly perform the following steps on the list
(with infinitely many zero parts appended). Initialize a sign variable
$\epsilon=+1$.  Look for the unique ascent $a<b$ in the current list.
If $b=a+1$, then return zero as the answer.  
Otherwise replace the sublist $(a,b)$ by $(b-1,a+1)$, 
replace $\epsilon$ by $-\epsilon$, and continue.  
We eventually return zero or obtain a weakly decreasing
list of nonnegative integers. In the latter case, return $\epsilon$
times the Schur function indexed by this list. 

This algorithm is a version of Littlewood's method for straightening
Jacobi--Trudi determinants~\cite{littlewood}.  Given any list of integers
$\alpha=(\alpha_1,\ldots,\alpha_L)$, let $D(\alpha)$ be the
determinant of the $L\times L$ matrix with $i,j$-entry $h_{\alpha_i+j-i}$.
For an integer partition $\mu$, $D(\mu)$ is the Schur function
$s_{\mu}$ by the Jacobi--Trudi formula. For any $\alpha$,
$D(\alpha)$ is either $0$ or $\pm s_{\nu}$ for some partition $\nu$. 
We can find $\nu$ by repeatedly interchanging rows $i$ and $i+1$ of
the matrix where $\alpha_i<\alpha_{i+1}$. 
Each such interchange causes a sign change and replaces 
parts $\alpha_i$ and $\alpha_{i+1}$ in $\alpha$ by $\alpha_{i+1}-1$
and $\alpha_i+1$, respectively. Comparing to the previous paragraph,
we see that $\Sop_m(s_{\mu})$ is none other than $D(m,\mu_1,\ldots,\mu_L)$.
The straightening process can also be performed visually on
composition diagrams using the \emph{slinky rule}, as illustrated
in~\cite{slinky}.

\begin{example}\label{ex:computeS}
Given $\mu=(8,8,8,4,4,2,2,1)$ and $m=-2$, we compute
\begin{align*}
 \Sop_{-2}(s_{\mu})
 &=S_{(-2,8,8,8,4,4,2,2,1)}
 =-S_{(7,-1,8,8,4,4,2,2,1)}
 =+S_{(7, 7,0,8,4,4,2,2,1)}
\\& =-S_{(7, 7,7,1,4,4,2,2,1)}
 =+S_{(7, 7,7,3,2,4,2,2,1)}
 =-S_{(7, 7,7,3,3,3,2,2,1)}
\\& =-s_{(777333221)}. 
\end{align*}
By similar calculations, we find the values of $\Sop_m(s_{\mu})$ shown in
Table~\ref{tab:computeS}.
\end{example}

\begin{table}
\begin{center}
\[ \begin{array}{lll}
\Sop_m(s_{\mu}) = +s_{(m,88844221)} \mbox{ for all $m\geq 8$,}
& \Sop_4(s_{\mu}) = -s_{(777744221)},
& \Sop_3(s_{\mu}) = -s_{(777644221)},
\\ \Sop_2(s_{\mu}) = -s_{(777544221)},
& \Sop_1(s_{\mu}) = -s_{(777444221)},
& \Sop_{-2}(s_{\mu}) = -s_{(777333221)},
\\ \Sop_{-3}(s_{\mu}) = -s_{(777332221)},
& \Sop_{-6}(s_{\mu}) = -s_{(777331111)},
& \Sop_{-8}(s_{\mu}) = +s_{(777331100)},
\\ \Sop_{m}(s_{\mu}) = 0 \mbox{ for all other $m\in\Z$.} &&
\end{array} \]
\caption{Values of $\Sop_m(s_{(88844221)})$ for all $m\in\Z$.}
\label{tab:computeS}
\end{center}
\end{table}

It is not obvious that the two methods we have described for computing 
$\Sop_m(s_{\mu})$ always give the same result. 
We prove this fact later using abacus-histories, 
and we will also give direct combinatorial proofs of~\eqref{eq:s-create}
and~\eqref{eq:S-commute}.

\subsection{Jing's Creation Operators $\Hop_m$}
\label{subsec:defineH}

Here is a plethystic definition of Jing's creation operators $\Hop_m$.
As in~\cite[(2.2)]{HMZ12}, for all $m\in\Z$ and $P\in\Lambda$, let
\begin{equation}\label{eq:defH-pleth}
 \Hop_m(P)=\left.\left\{ P\left[X+\frac{q-1}{z}\right]
     \sum_{k=0}^{\infty} h_kz^k\right\}\right|_{z^m}. 
\end{equation}
In this case, $P[X+(q-1)/z]$ denotes the image of $P$ under the
plethystic substitution sending each $p_n$ to $p_n+(q^n-1)/z^n$.

Alternatively, we could define
\begin{equation}\label{eq:defH-alg}
 \Hop_m=\sum_{c\geq 0} q^c\Sop_{m+c}\circ\, h_c^{\perp}.  
\end{equation}
We prove the equivalence of definitions~\eqref{eq:defH-pleth}
and~\eqref{eq:defH-alg} in Section~\ref{sec:pleth-prf}.

We can compute $\Hop_m(s_{\mu})$ via Ferrers diagrams as follows.
Starting with the diagram of $\mu$, do the following steps in all possible
ways. First choose a nonnegative integer $c$. Then remove a horizontal
strip of $c$ cells from $\mu$ to get some shape $\nu$. Compute
$\Sop_{m+c}(\nu)$ as described earlier to obtain zero or a signed Schur 
function $\pm s_{\lambda}$. Record $q^c$ times the answer
as one of the terms in the Schur expansion of $\Hop_m(s_{\mu})$. By iterating
this description, it is evident that for every list of integers $\alpha$,
the Schur coefficients of $H_{\alpha}$ are polynomials in $q$ with
integer coefficients. Jing~\cite{Jing91} proved that when $\alpha$ is a 
partition $\mu$, $H_{\mu}=\Hop_{\mu_1}\circ\cdots\circ\Hop_{\mu_L}(1)$ is none
other than the Hall--Littlewood symmetric function indexed by $\mu$.

\subsection{The Creation Operators $\Cop_m$ and $\Bop_m$}
\label{subsec:defineCB}

We use~\cite[Remark 3.7, pg. 835]{HMZ12} as our plethystic definition of
the creation operator $\Cop_m$. For $m\in\Z$ and $P\in\Lambda$, let
\begin{equation}\label{eq:defC-pleth}
 \Cop_m(P)=\left.\left\{(-q^{-1})^{m-1}P\left[X+\frac{q^{-1}-1}{z}\right]
     \sum_{k=0}^{\infty} h_kz^k\right\}\right|_{z^m}. 
\end{equation}
This operator is a variation of $\Hop_m$ obtained by replacing
$q$ by $1/q$ in $\Hop_m$, and then multiplying the output 
by a global factor $(-1/q)^{m-1}$. So~\eqref{eq:defH-alg} leads at once
to the following alternative definition of $\Cop_m$:
\begin{equation}\label{eq:defC-alg}
 \Cop_m=(-q^{-1})^{m-1}\sum_{c\geq 0} q^{-c}\Sop_{m+c}\circ\, h_c^{\perp}.  
\end{equation}

Proposition~3.6 of~\cite{HMZ12} proves an inverse version of this
identity, namely
\[ \Sop_m=(-q)^{m-1}\sum_{i\geq 0} \Cop_{m+i}\circ\, e_i^{\perp}. \]
Creation operators satisfy some useful commutation relations.
For example, Proposition 3.2 of~\cite{HMZ12} shows that for $m,n\in\Z$,
\[ q\Cop_m\circ\Cop_n-\Cop_{m+1}\circ\Cop_{n-1}
  =\Cop_n\circ\Cop_m-q\Cop_{n-1}\circ\Cop_{m+1}, \]
and in particular $q\Cop_m\circ\Cop_{m+1}=\Cop_{m+1}\circ\Cop_m$.
Analogous relations for $\Hop_m$ were proved much earlier by
Jing (see~(1.1) in~\cite{Jing91} or~(0.18) in~\cite{Jing-thesis}).

Finally, we define the creation operator $\Bop_m$ by conjugating
$\Hop_m$ by $\omega$ (see~\cite[pg. 829]{HMZ12}):
\begin{equation}\label{eq:defB}
 \Bop_m=\omega\circ\Hop_m\circ\, \omega.  
\end{equation}
Recall that $\omega$ is the linear operator on $\Lambda$ sending
each Schur function $s_{\lambda}$ to $s_{\lambda'}$,  
where $\lambda'$ is the partition conjugate to $\lambda$
obtained by transposing the Ferrers diagram of $\lambda$.
Proposition 3.5 of~\cite{HMZ12} shows that for $m+n>0$, 
$\Bop_n\circ\Cop_m=q\Cop_m\circ\Bop_n$.

\subsection{Algebraic Rules for Conjugation by $\omega$}
\label{subsec:alg-conj-omega}

Let $\CO$ denote conjugation by $\omega$, which sends any
operator $G$ on $\Lambda$ to $\CO(G)=\omega\circ G\circ \omega$.  
We now give some useful identities involving $\CO$.  First, 
\begin{equation}\label{eq:conjMf}
\CO(\M{f})=\M{\omega(f)}\quad\mbox{ for all $f\in\Lambda$.}
\end{equation}
To check this, recall that $\omega$ is a ring homomorphism on
$\Lambda$ and an involution ($\omega\circ \omega=\id$). So for 
any $P\in\Lambda$,
\[ \CO(\M{f})(P)=\omega(\M{f}(\omega(P)))
   =\omega(f\cdot\omega(P))=\omega(f)\cdot\omega(\omega(P))
   =\omega(f)\cdot P=\M{\omega(f)}(P). \]

Second, $\omega^{\perp}=\omega$. This follows since 
$\omega$ is an involution and an isometry (relative to the Hall
scalar product), which means that for all $P,Q\in\Lambda$,
$\scprod{\omega(P)}{Q}=\scprod{P}{\omega(Q)}=\scprod{\omega^{\perp}(P)}{Q}$.

Third,
\begin{equation}\label{eq:conjperp}
 \CO(f^{\perp})=(\omega(f))^{\perp}\quad\mbox{ for all $f\in\Lambda$.} 
\end{equation}
To see this, we use the first two facts and the adjoint property
 $(F\circ G)^{\perp}=G^{\perp}\circ F^{\perp}$ to compute:
\[ \CO(f^{\perp})=\omega\circ(\M{f})^{\perp}\circ \omega
  =\omega^{\perp}\circ (\M{f})^{\perp}\circ\omega^{\perp}
  =(\omega\circ\M{f}\circ\omega)^{\perp}=(\M{\omega(f)})^{\perp}
  =(\omega(f))^{\perp}. \]

Fourth, using~\eqref{eq:defS-alg} and $\CO(F\circ G)=\CO(F)\circ\CO(G)$,
we find that
\[ \CO(\Sop_m)=\sum_{c\geq 0} (-1)^c \CO(\M{h_{m+c}})\circ\CO(e_c^{\perp})
            =\sum_{c\geq 0} (-1)^c \M{e_{m+c}}\circ h_c^{\perp}. \]

Fifth, using this result and~\eqref{eq:defH-alg}, we get
\[ \Bop_m =\CO(\Hop_m)=\sum_{d\geq 0} q^d\CO(\Sop_{m+d})\circ\CO(h_d^{\perp})
 =\sum_{c\geq 0}\sum_{d\geq 0}
      q^d(-1)^c \M{e_{m+d+c}}\circ h_c^{\perp}\circ e_d^{\perp}. \] 
So, we can compute $\Bop_m(s_{\mu})$ via Ferrers diagrams as follows.
Starting with the diagram of $\mu$, do the following steps in all
possible ways. First, choose integers $c,d\geq 0$. Remove a 
vertical strip of $d$ cells from $\mu$, then remove a horizontal strip
of $c$ cells, then add a vertical strip of $m+d+c$ cells. Record 
$q^d(-1)^c$ times the Schur function indexed by the new shape as
one of the terms in the Schur expansion of $\Bop_m(s_{\mu})$. Later,
we use abacus-histories to find a more efficient combinatorial rule
for computing this Schur expansion.


\section{Combinatorial Development of Creation Operators}
\label{sec:comb-create}

This section develops combinatorial formulas for the Schur expansions
of $G(s_{\mu})$, where $s_{\mu}$ is any Schur function and $G$ is one
of the operators $\M{h_m}$, $h_m^{\perp}$, $\M{e_m}$, $e_m^{\perp}$, $\omega$,
$\Sop_m$, $\Hop_m$, $\Cop_m$, or $\Bop_m$. These formulas are based
on the \emph{abacus model} for representing integer partitions. 
James and Kerber~\cite{JK} introduced abaci to prove facts about $k$-cores
and $k$-quotients of integer partitions. Abaci with labeled beads can
be used to prove many fundamental facts about Schur functions, including
the Pieri Rules for expanding $s_{\mu}h_k$ and $s_{\mu}e_k$ and the
Littlewood--Richardson Rule~\cite{loehr-abacus}. In our work here, it
suffices to consider unlabeled abaci. We introduce the new ingredient
of tracking the evolution of the abacus over time to model compositions
of operators applied to a given Schur function. This leads to 
new combinatorial objects called \emph{abacus-histories} that
model the Schur expansions of $H_{\alpha}$, $C_{\alpha}$, $B_{\alpha}$,
and other symmetric functions built by composing creation operators.

\subsection{The Abacus Model}
\label{subsec:abacus-model}

First we review the correspondence between partitions and abaci.
Suppose $N>0$ is fixed and $\mu=(\mu_1,\mu_2,\ldots,\mu_N)$ is an integer 
partition (weakly decreasing sequence) consisting of $N$ nonnegative parts.
Let $\delta(N)=(N-1,N-2,\ldots,2,1,0)$. The map sending $\mu$ to
$\mu+\delta(N)=(\mu_1+N-1,\mu_2+N-2,\ldots,\mu_N)$ is a bijection
from the set of weakly decreasing sequences of $N$ nonnegative integers
onto the set of strictly decreasing sequences of $N$ nonnegative integers.
We visualize the sequence $\mu+\delta(N)$ by drawing
an abacus with positions numbered $0,1,2,\ldots$, and placing
a bead in position $\mu_i+N-i$ for $1\leq i\leq N$. We use $\mu+\delta(N)$
rather than $\mu$ because each position can hold at most one bead.
To formalize this concept, we define an \emph{$N$-bead abacus} to be 
a word $w=w_0w_1w_2\cdots$ with all $w_i\in\{0,1\}$ and $w_i=1$ for exactly
$N$ indices $i$. Here $w_i=1$ means the abacus has a bead in position $i$,
while $w_i=0$ means the abacus has a gap in position $i$. For example,
if $N=10$ and $\mu=(8,8,8,4,4,2,2,1,0,0)$, the associated abacus is
\begin{equation}\label{eq:abacus-ex}
 w=11010110011000011100000\cdots. 
\end{equation}

In the theory of symmetric functions, we usually identify 
two partitions that differ only by adding or deleting zero parts.
In fact, it is often most convenient to regard an integer partition
as an infinite weakly decreasing sequence ending in infinitely many zeroes.
To model such a sequence $\mu$ as an abacus, we use a doubly-infinite
word $w=(w_i:i\in\Z)$ such that $w_i=1$ for all $i<0$, $w_0=0$,
and $w_i=1$ for only finitely many indices $i\geq 0$. The nonzero parts of
$\mu$ can be recovered from the abacus $w$ by counting the number of gaps
to the left of each bead on the positive side of the abacus.
The convention of putting the leftmost gap at position $0$ is not essential;
any left-shift or right-shift of the word $w$ leads to the same combinatorics.

We can also construct the abacus for a partition $\mu$ by following 
the frontier of the Ferrers diagram of $\mu$. As illustrated in 
Figure~\ref{fig:ptn-abacus}, the frontier consists of 
infinitely many north steps (corresponding to the infinitely many
zero parts at the end of the sequence $\mu$), followed by a sequence of
east and north steps that follow the edge of the diagram, followed
by infinitely many east steps at the top edge. Replacing each north
step with a bead, replacing each east step with a gap, and declaring
the first east step to have index $0$ produces the doubly-infinite abacus
associated with $\mu$. The singly-infinite abacus $w$ in~\eqref{eq:abacus-ex}
is the ten-bead version of the full abacus shown in 
Figure~\ref{fig:ptn-abacus}, obtained by discarding all 
beads to the left of the tenth bead from the right
and shifting the origin to this location.

For convenience, we mostly use $N$-bead abaci in this paper, 
which leads to identities valid for symmetric polynomials
in $N$ variables.  However, for certain abacus moves to work, we must be sure 
to pad $\mu$ with enough zero parts (corresponding to beads at the 
far left end of the abacus) since these beads might participate in the move.
This reflects the algebraic fact that some symmetric function identities
are only true provided the number of variables is large enough.

\begin{figure}
\begin{center}
\epsfig{file=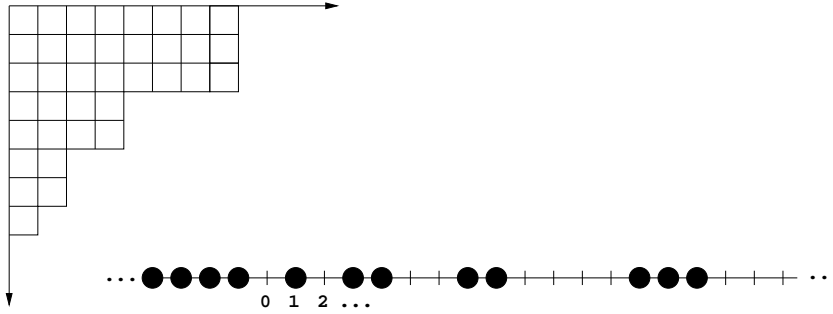,scale=0.6}
\caption{The Ferrers diagram of $\mu$ and the doubly-infinite abacus
 built from the frontier of $\mu$.}
\label{fig:ptn-abacus}
\end{center}
\end{figure}

\subsection{Abacus Versions of $\M{h_m}$ and $h_m^{\perp}$}
\label{subsec:abacus-h}

Now we describe how to compute $G(s_{\mu})$ using abaci, for various
operators $G$.  Here and below, we represent the input $s_{\mu}$ 
as an $N$-bead abacus drawn in the top row of a diagram. Each operator
$G$ acts by moving beads on the abacus according to certain rules,
producing several possible new abaci that may be multiplied by signs
or weights (powers of $q$). Each abacus stands for the Schur function
indexed by the partition corresponding to the abacus.  
We make a diagram for each possible new abacus produced by $G$,
where the output abacus appears in the second row (see the figures
below for examples).  Moving downward through successive rows represents
the passage of time as various operators are applied to the initial abacus.  
This convention lets us use a two-dimensional picture to display the
evolution of an abacus as a whole sequence of operators are performed.
It is much more difficult to visualize such an operator sequence
using Ferrers diagrams, particularly when some operators act by
adding cells and others act by removing cells.

As our first example, consider the computation of 
$\M{h_m}(s_{\mu})=h_ms_{\mu}$ using abaci. 
By the Pieri Rule~\eqref{eq:pieri-h}, we know $h_ms_{\mu}$
is the sum of all $s_{\lambda}$ where $\lambda$ is obtained by adding 
a horizontal strip of $m$ cells to the diagram of $\mu$ in all possible ways.
Using the correspondence between the frontier of $\mu$ and the
abacus for $\mu$, it is routine to check that adding such a horizontal
strip corresponds to moving various beads right a total of $m$ positions 
on the abacus.  (See~\cite{loehr-abacus} or~\cite[Sec. 10.10]{loehr-comb} 
for more details.) A given bead may move more than once, but no bead may move 
into a position originally occupied by another bead.

To record this move in an abacus-history diagram, we start in row $1$ 
with an $N$-bead abacus for $\mu$, where $\mu$ must end in at least one
part equal to $0$. We draw the beads as dots located at lattice points
in row $1$. Next, in all possible ways, we draw a total of $m$ east
steps starting at various beads, then move every bead $1$ step south
to represent the passage of time. For example, Figure~\ref{fig:mult-h}
shows the abacus-histories encoding the computation
\[ \M{h_2}(s_{(3110)})=s_{(5110)}+s_{(4210)}+s_{(4111)}
   +s_{(3310)}+s_{(3211)}. \]
Note that the extra zero part is needed to accommodate horizontal strips
that use cells in the row below the last nonzero part of $\mu$.

\begin{figure}
\begin{center}
\epsfig{file=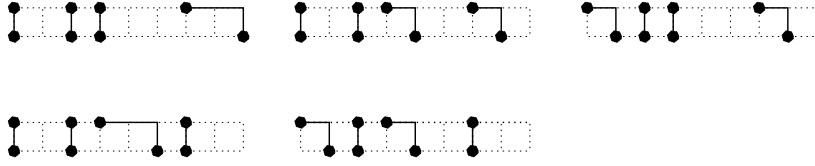,scale=0.6}
\caption{Computing $\M{h_2}(s_{(3110)})$ using abacus-histories.}
\label{fig:mult-h}
\end{center}
\end{figure}

Next consider how to compute $h_m^{\perp}(s_{\mu})$. Recall that the answer
is the sum of all $s_{\nu}$ where $\nu$ can be obtained by \emph{removing}
a horizontal strip of $m$ cells from the Ferrers diagram of $\mu$. To execute
this action on an abacus, we move various beads \emph{west} a total
of $m$ positions, avoiding collisions with the original locations of the
beads. Then we move every bead south one step to represent the passage 
of time.  For example, Figure~\ref{fig:h-perp} shows the abacus-histories 
encoding the computation
\[ h_2^{\perp}(s_{(332)})=s_{(330)}+s_{(321)}. \]
Note that $\mu$ need not be padded with zero parts when using this rule.

\begin{figure}
\begin{center}
\epsfig{file=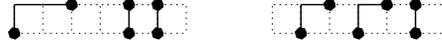,scale=0.6}
\caption{Computing $h_2^{\perp}(s_{(332)})$ using abacus-histories.}
\label{fig:h-perp}
\end{center}
\end{figure}

\subsection{Abacus Versions of $\M{e_m}$ and $e_m^{\perp}$}
\label{subsec:abacus-e}

Now we describe abacus implementations of $\M{e_m}$ and $e_m^{\perp}$.
Recall $\M{e_m}(s_{\mu})$ is the sum of all $s_{\lambda}$ where $\lambda$
is obtained by adding a vertical strip of $m$ cells to the Ferrers
diagram of $\mu$.
Comparing the frontiers of $\mu$ and $\lambda$, we see that adding such
a vertical strip corresponds to simultaneously moving $m$ distinct
beads east one step each. Beads cannot collide on the new abacus, but
a bead is allowed to move into a position vacated by another bead.

To record this move in an abacus-history diagram, start with an $N$-bead
abacus for $\mu$ where $\mu$ is padded with at least $m$ zero parts.
In all possible ways, pick a set of $m$ beads that each move southeast
one step, while the remaining beads move south one step with no collisions.
For example, Figure~\ref{fig:mult-e} shows the abacus-histories encoding
the computation
\[ \M{e_2}(s_{(41100)})=
 s_{(41111)}+s_{(42110)}+s_{(51110)}+s_{(42200)}+s_{(52100)}. \] 
Note that the $m$ zero parts are needed to accommodate a vertical strip
that we might add below the last nonzero part of $\mu$.

\begin{figure}
\begin{center}
\epsfig{file=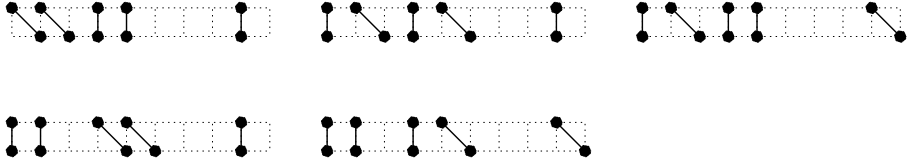,scale=0.6}
\caption{Computing $\M{e_2}(s_{(41100)})$ using abacus-histories.}
\label{fig:mult-e}
\end{center}
\end{figure}

Next, $e_m^{\perp}(s_{\mu})$ is the sum of all $s_{\nu}$ where $\nu$
is obtained from the Ferrers diagram of $\mu$ by removing a vertical strip of 
$m$ cells. Starting with the abacus for $\mu$, we pick a set of $m$ beads
that each move southwest one step, while the remaining beads move south
one step with no collisions. For example, Figure~\ref{fig:e-perp} shows
the abacus-histories encoding the computation
\[ e_2^{\perp}(s_{(442)})=s_{(431)}+s_{(332)}. \]
Note that $\mu$ need not be padded with zero parts when using this rule.

\begin{figure}
\begin{center}
\epsfig{file=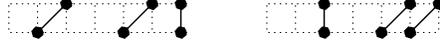,scale=0.6}
\caption{Computing $e_2^{\perp}(s_{(442)})$ using abacus-histories.}
\label{fig:e-perp}
\end{center}
\end{figure}

\subsection{Effects of $\omega$ and $\CO$}
\label{subsec:abacus-omega}

We know $\omega(s_{\mu})=s_{\mu'}$, where the Ferrers diagram of $\mu'$
is found by transposing the diagram of $\mu$. This transposition
interchanges the roles of north and east steps on the frontier
of $\mu$ and reverses the order of these steps. So, $\omega$ acts
on the doubly-infinite abacus for $\mu$ by interchanging
beads and gaps and reversing the abacus. Let us call this move
an \emph{abacus-flip}.

Now suppose we know a description of an operator $G$ in terms
of moves on an abacus. Then $\CO(G)=\omega\circ G\circ\omega$
acts on the abacus for $s_{\mu}$ by doing an abacus-flip,
then doing the moves for $G$, then doing another abacus-flip.
Therefore, we obtain a description of the operator $\CO(G)$ from the
given description of $G$ by interchanging the roles of beads
and gaps, and interchanging the roles of east and west.

For example, consider $G=\M{h_m}$ and $\CO(G)=\M{e_m}$.
We know $G$ acts on the abacus for $s_{\mu}$ by moving some \emph{beads}
$m$ steps \emph{east}, avoiding the original positions of all \emph{beads}.  
Therefore, we can say that $\CO(G)$ acts on the abacus for $s_{\mu}$
by moving some \emph{gaps} $m$ steps \emph{west} avoiding the
original positions of all \emph{gaps}. One may check that this description
of $\M{e_m}$ (involving gap motions) is equivalent to the description 
given earlier (involving bead motions). 
When composing several operators to build
bigger abacus-histories, it is often easier to work with descriptions that
always move beads rather than gaps.

\subsection{Abacus Version of $\Sop_m$}
\label{subsec:abacus-Sm}

We develop an initial abacus-history implementation of the operator
$\Sop_m$ based on formula~\eqref{eq:defS-alg}, which will subsequently
be simplified using a sign-reversing involution on abacus-histories.
Given any partition $\mu$, we know by~\eqref{eq:defS-alg} that
\[ \Sop_m(s_{\mu})=\sum_{c\geq 0} (-1)^c h_{m+c}e_c^{\perp}(s_{\mu}). \]
To compute this with an abacus-history, start with the abacus for $\mu$
(padding $\mu$ with a zero part if needed) and perform the following
steps in all possible ways. Choose an integer $c\geq 0$.
In the first time step, 
apply $e_c^{\perp}$ by moving $c$ distinct beads one step 
southwest while the remaining beads move one step south with no collisions.
In the second time step, apply $h_{m+c}$ by moving some beads a total of $m+c$ 
steps east, never moving into a position occupied by a bead at the start
of this time step; then move all beads one step south. Record
a term $(-1)^c$ times the Schur function corresponding to the final abacus.
For example, Figure~\ref{fig:bern1} shows how to compute
$\Sop_1(s_{(3110)})$ via abacus-histories. When $c=0$, we obtain the
three positive objects labeled A, B, C; when $c=1$, we obtain the
nine negative objects labeled D through L; and so on. Adding up the
23 signed Schur functions encoded by these abaci, there is massive
cancellation leading to the answer $-s_{(2211)}$.

\begin{figure}[h]
\begin{center}
\epsfig{file=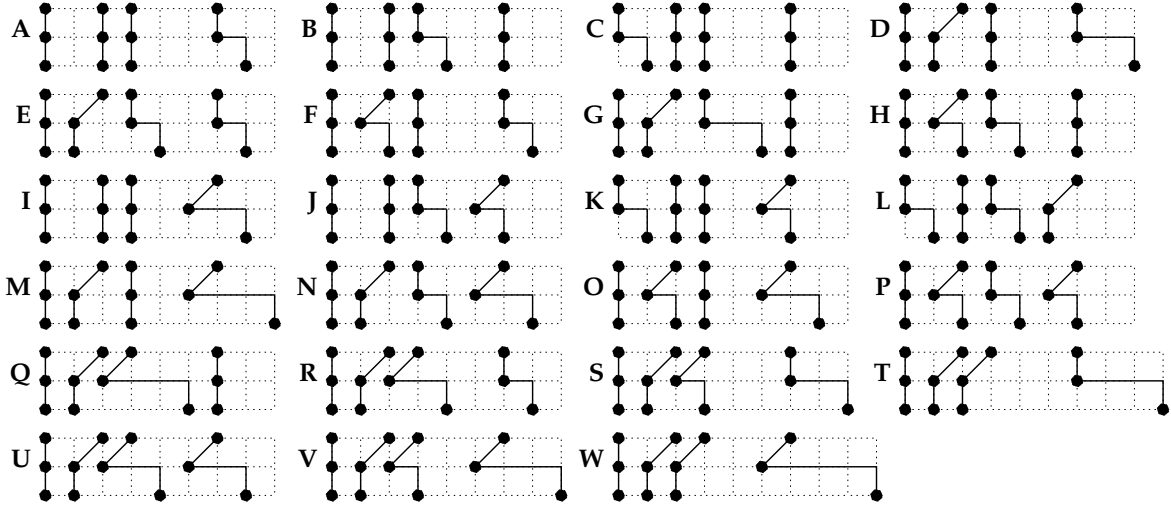,scale=0.6}
\caption{Initial computation of $\Sop_1(s_{(3110)})$ using abacus-histories.}
\label{fig:bern1}
\end{center}
\end{figure}

We now introduce a sign-reversing involution on abacus-histories that
explains the cancellation in the last example. Suppose an abacus-history
appearing in the computation of $\Sop_m(s_{\mu})$ contains a bead that
moves southwest, then moves east $i>0$ steps and then south, as shown
on the left in Figure~\ref{fig:bcancel}. By changing the first two
steps from southwest-east to a single south step, the bead now moves
as shown on the right in Figure~\ref{fig:bcancel}, where the $e$ denotes
a gap on the abacus that no bead visits during the second time interval.
Conversely, if a bead initially moves south and then has a gap to its left
that no bead visits, then we can
replace this initial south step with a southwest step followed by an east
step. These path modifications change $c$ by $1$ and hence change the sign 
of the abacus-history, while preserving the requirement of taking $m+c$
total east steps in the second time interval. The involution acts on
an abacus-history by scanning for the leftmost occurrence of one of
the patterns in Figure~\ref{fig:bcancel} and replacing it with the other
pattern. It is clear that doing the involution twice restores the 
original object. The fixed points of the involution (which could be
negative) consist of all abacus-histories avoiding both patterns
in Figure~\ref{fig:bcancel}.  
For example, applying the involution to the objects in Figure~\ref{fig:bern1}
produces the following matches:
\[ A\leftrightarrow F,\ 
B\leftrightarrow H,\ 
C\leftrightarrow K,\ 
D\leftrightarrow S,\ 
E\leftrightarrow R,\ 
G\leftrightarrow Q,\ 
I\leftrightarrow O,\ 
J\leftrightarrow P,\ 
M\leftrightarrow V,\ 
N\leftrightarrow U,\ 
T\leftrightarrow W. 
\]
We are left with the single negative fixed point $L$, confirming that 
$\Sop_1(s_{(3110)})=-s_{(2211)}$.

\begin{figure}
\begin{center}
\epsfig{file=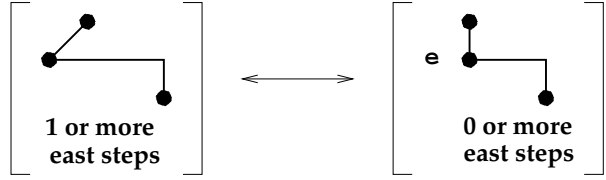,scale=0.8}
\caption{Cancellation move for abacus-histories appearing in $\Sop_m(s_{\mu})$.}
\label{fig:bcancel}
\end{center}
\end{figure}

We can now prove a formula for $\Sop_m(s_{\mu})$ that interprets
$\Sop_m$ as a \textbf{bead-creation operator} for abacus-histories.
As consequences of this formula, we can finally justify
equations~\eqref{eq:s-create},~\eqref{eq:S-commute}, and the technique
for computing $\Sop_m(s_{\mu})$ used in Example~\ref{ex:computeS}.  To
state the formula, we need some preliminary definitions.  For any
partition $\mu$, assign a \emph{label} and a \emph{sign} to each gap
on the abacus for $\mu$ as follows.  Given a gap with $g$ gaps
strictly to its left and $b$ beads to its right, let this gap have
label $g-b$ and sign $(-1)^b$. 
Here is another way to compute the gap
labels.  The gap to the right of the rightmost bead on the abacus for
$\mu$ has label $\mu_1$.  Any gap $i$ positions to the right
(resp. left) of this gap on the abacus has label $\mu_1+i$
(resp. $\mu_1-i$), as is readily checked.

\begin{theorem}\label{thm:create-bead}
For any partition $\mu$ and integer $m$, $\Sop_m(s_{\mu})$ is $0$
if no gap in the abacus for $\mu$ has label $m$.  
Otherwise, we compute $\Sop_m(s_{\mu})$ by filling the unique
gap labeled $m$ with a new bead, then multiplying the Schur function
for the new abacus by the sign of this gap.  
\end{theorem}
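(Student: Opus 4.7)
The plan is to start from the abacus-history expansion
\[
\Sop_m(s_\mu) \;=\; \sum_{c\geq 0}(-1)^c\, \M{h_{m+c}}\circ e_c^{\perp}(s_\mu)
\]
developed in Section~\ref{subsec:abacus-Sm}, apply the sign-reversing involution of Figure~\ref{fig:bcancel}, and analyze its fixed points. Since the involution pairs abacus-histories of opposite sign, only the fixed points contribute; the theorem thus reduces to the claim that these fixed points are indexed bijectively by gaps of label $m$ on the abacus for $\mu$, each contributing the prescribed signed Schur function.

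For the forward direction I would construct, for each gap $g$ on the abacus of $\mu$ with $b$ beads strictly to its right, an explicit candidate fixed point $H_g$ with $c=b$: in phase~$1$ the $b$ beads right of $g$ move southwest and all other beads move south; in phase~$2$ a leftward cascade of east moves is applied in which each of the $N-b$ S-beads slides east just far enough to sit one position to the left of its right neighbor (the rightmost such S-bead landing at the shifted image of $g$), while the SW-beads stay put. A short computation shows that the total east displacement equals $g-N+b$, which equals the required $m+c=m+b$ exactly when $g=m+N$, i.e., when the label of $g$ is $m$. The final abacus then represents the partition $\lambda(g)$ obtained by filling $g$ with a bead, the sign of $H_g$ is $(-1)^c=(-1)^b$ (matching the sign of $g$), and both fixed-point conditions of Figure~\ref{fig:bcancel} are immediate: SW-beads take no east step, and each S-bead has its immediately-left position either occupied at the start of phase~$2$ or visited by its left neighbor in the cascade.

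The converse, that every fixed point equals $H_g$ for some such $g$, is the main obstacle. The key lemma is that the SW-beads of any fixed point must form the suffix of the sorted bead sequence, i.e., the $c$ rightmost beads. Suppose not, and pick a pair (SW-bead $r$, S-bead $r'$) with $r<r'$ minimizing $r'-r$, so that no initial bead lies strictly between $r$ and $r'$. After phase~$1$ the SW-bead sits at $r-1$ and each of the positions $r, r+1, \ldots, r'-1$ is a gap at the start of phase~$2$; by fixed-point condition~(i) the SW-bead takes no east step, and by the Pieri constraint (no bead may enter a position occupied at the start of phase~$2$) no bead further to the left can cross the SW-bead at $r-1$ to reach those gaps. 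Consequently the gap immediately left of $r'$ is unvisited, violating condition~(ii) for the S-bead at $r'$. With the SW-set identified as the rightmost $c$ beads, condition~(ii) applied from left to right forces each S-bead's east displacement to match the cascade, and the constraint that the phase-$2$ east steps sum to $m+c$ then pins $g=m+N$.

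Finally, the labels of gaps strictly increase as one scans the abacus from left to right (passing to the next gap adds one gap to the left and subtracts a nonnegative number of beads from the right), so at most one gap carries the label $m$. If none does, the fixed-point set is empty and $\Sop_m(s_\mu)=0$; otherwise the unique gap $g$ yields the unique fixed point $H_g$, contributing $(-1)^b s_{\lambda(g)}$, which is exactly the rule of the theorem.
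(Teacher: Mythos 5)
Your proposal is correct and follows essentially the same route as the paper: expand $\Sop_m(s_{\mu})$ via the abacus-histories of Section~\ref{subsec:abacus-Sm}, cancel with the involution of Figure~\ref{fig:bcancel}, and classify the surviving fixed points --- your suffix lemma for the SW-beads and the forced eastward cascade are precisely the paper's structural analysis, and your displacement count $g-N+b=m+b$ is the paper's verification that the filled gap has label $m$. The one caveat is that ``slides east just far enough to sit one position to the left of its right neighbor'' must be read as one position left of the neighbor's location at the \emph{start} of phase~2 (before that neighbor's own east steps); under the other natural reading the cascade would make illegal moves and give the wrong total, but your formula $g-N+b$ shows you intend the correct reading.
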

\begin{proof} 
First compute $\Sop_m(s_{\mu})$ by generating a collection of signed
abacus-histories, as described at the start of this section.
Next apply the sign-reversing involution to cancel out pairs of objects.
We must now analyze the structure of the fixed points
that remain. All fixed points avoid occurrences of both patterns
shown in Figure~\ref{fig:bcancel}. 
From our abacus-history characterization of the action of
$\Sop_m(s_\mu)$, there are only two other possible move patterns for a
bead starting in the abacus for $s_\mu$. The first is for a bead to
move south and then east zero or more east steps, but there must be
another bead southwest of this bead's starting point
(i.e., in the position marked $e$ on the right side of
Figure~\ref{fig:bcancel}). The second is for a bead to move
southwest and then south with no intervening east steps. In the
second case, we refer to this pair of steps as a \emph{zig-down move}
and say that the bead \emph{zigs down}.

Given a fixed point, suppose there is
a bead $Q$ on the input abacus that makes a zig-down move.
On one hand, suppose there is a bead $P$ immediately to the left of $Q$
(i.e., with no gaps in between).  Then $P$ must also zig down, since there is 
no room to do anything else. On the other hand, suppose
$R$ is the next bead somewhere to the right of $Q$ (if any).
There must be a vacancy immediately southwest of $R$'s initial position 
(since $Q$ zigs down). It follows that $R$ must also zig down to avoid the 
two forbidden patterns. 
Define a \emph{block} of beads on an abacus to be a maximal set of
beads with no gaps between any pair of them.
Iterating our two preceding observations, we conclude that \emph{if
  some bead zigs down in a fixed point, then all beads to its right
  and all beads to its left in its block must also zig down}.

Next consider a bead $S$ on the input abacus that has $g>0$ gaps
to its right followed by another bead $T$ that does not zig down.
On one hand, $S$ does not zig down (or else $T$ would too).
On the other hand, for $T$ to avoid the second pattern in 
Figure~\ref{fig:bcancel}, bead $S$ must move south, then $g$ steps east,
then south. We now see that all fixed points for $\Sop_m(s_{\mu})$ must
have the following structure. There are zero or more beads at the right
end that all zig down, starting with some bead $Q$ at the beginning
of a block of beads. The motions of all remaining beads
are uniquely determined (they move east as far as possible), 
except for the next bead $P$ to the left of $Q$.  
If beads $P$ and $Q$ are separated by $g>0$ gaps
on the input abacus, then $P$ has the option of moving south,
then $i$ steps east, then south, for any $i$ satisfying $0\leq i<g$.  
As a special case, if no beads zig down, then $P$ is the rightmost
bead on the abacus, which can move $i$ steps east for any $i\geq 0$.
For example, Figure~\ref{fig:fixed} illustrates some of the fixed points
for $\Sop_m(s_{(888442210)})$ for various choices of $m$ (compare
to Example~\ref{ex:computeS}).

\begin{figure}
\begin{center}
\epsfig{file=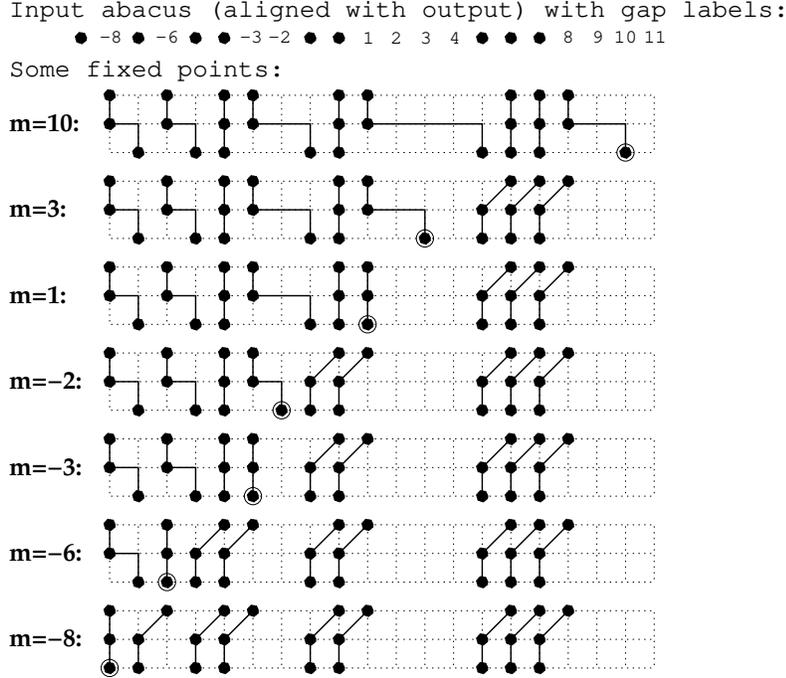,scale=0.6} 
\caption{Fixed points in the computation of $\Sop_m(s_{(888442210)})$
 for various values of $m$. Newly created beads are circled.}
\label{fig:fixed}
\end{center}
\end{figure}

It is visually evident from this example that, for general $\mu$ and
any fixed point of $\Sop_m(s_{\mu})$, the output abacus 
arises from the input abacus by shifting every bead one position
to the left (which corresponds to deleting a zero part from the
end of $\mu$) and then filling one gap with a new bead.
Suppose this gap has $g$ gaps
strictly to its left and $b$ beads to its right on the abacus for
$\mu$. The label of this gap is, by definition, $g-b$. To complete the
proof, we need only confirm that $g-b=m$. By our characterization of
fixed points, there are $b$ southwest steps in time interval $1$ (since
all beads to the right of this gap zig down). By~\eqref{eq:defS-alg}
and our observations at the beginning of this section, these southwest
steps arise from the action of $e_b^{\perp}$. As such, there must be
$m+b$ east steps arising from the subsequent action of $h_{m+b}$. But,
as illustrated in Figure~\ref{fig:fixed}, these east steps are in
bijection with the gaps to the left of the new bead. So the number of
gaps $g$ is $m+b$. It follows that $g-b=(m+b)-b=m$, as needed.
\end{proof}

Using the second method of computing gap labels,
we see that for any partition $\mu$
and any integer $m\geq\mu_1$, $\Sop_m(s_{\mu})=+s_{(m,\mu)}$.
Iterating this result starting with $s_0=1$, we obtain~\eqref{eq:s-create}.
Similarly, formula~\eqref{eq:S-commute} can be deduced quickly from
Theorem~\ref{thm:create-bead} by the following abacus-based proof.
The left side of~\eqref{eq:S-commute} acts on $s_{\mu}$
by first creating a new bead in the gap labeled $n$, then creating
a new bead in the gap now labeled $m$ (returning zero if either gap
does not exist). One readily checks that creating a new bead in
the gap labeled $n$ has the effect of decrementing all remaining gap
labels. Thus, $\Sop_m\circ\Sop_n$ acts by filling the gap labeled $n$,
then filling the gap \emph{originally} labeled $m+1$, if these gaps exist.
Similarly, $\Sop_{n-1}\circ\Sop_{m+1}$ acts by filling the gap labeled $m+1$,
then filling the gap \emph{originally} labeled $n$, if these gaps exist.
These actions are the same except for the order in which the two new
beads are created, which causes the two answers to differ by a sign change.
As a special case, when $n=m+1$, both sides of~\eqref{eq:S-commute}
output zero because the second operator on each side tries to fill a gap that
no longer exists. This completes the proof of~\eqref{eq:S-commute}.  
Finally, the computations in Example~\ref{ex:computeS} 
are now justified by combining formulas~\eqref{eq:s-create},~\eqref{eq:Salpha},
and~\eqref{eq:S-commute}.

By applying the results in Section~\ref{subsec:abacus-omega}, we 
also obtain a dual theorem characterizing the action of $\CO(\Sop_m)$ as a 
``gap-creation operator'' or a ``bead-destruction operator.''
Specifically, given a bead with $b$ beads
strictly to its right and $g$ gaps to its left, let this bead have label $b-g$ and sign $(-1)^g$.
(Note that this labeling of \emph{beads} is related to, but different from, 
our prior labeling scheme for \emph{gaps}.)
If the abacus for $\mu$ has a bead labeled $m$, then
$\CO(\Sop_m)(s_{\mu})$ is the sign of this bead times $s_{\nu}$,
where we get the abacus for $\nu$ by replacing the bead labeled $m$ by a gap.
If the abacus for $\mu$ has no bead labeled $m$, then
$\CO(\Sop_m)(s_{\mu})$ is zero.

\subsection{Abacus Versions of $\Hop_m$ and $\Cop_m$}
\label{subsec:abacusHC}

With Theorem~\ref{thm:create-bead} in hand, we can describe how
to compute $\Hop_m(s_{\mu})$ and $\Cop_m(s_{\mu})$ using abacus-histories.
Our implementation of $\Hop_m$ is based on formula~\eqref{eq:defH-alg}.
Starting with the abacus for $\mu$, do the following steps in all possible
ways. Choose an integer $c\geq 0$. In time step $1$, apply $h_c^{\perp}$
by moving some beads a total of $c$ steps west (avoiding original bead
positions), then moving all beads one step south. In time step $2$,
apply $\Sop_{m+c}$ by creating a new bead in the gap now labeled $m+c$ (if any).
The Schur function corresponding to the new abacus is weighted
by $q^c$ times the sign of the gap where the new bead was created.

\begin{figure}[ht]
\begin{center}
\epsfig{file=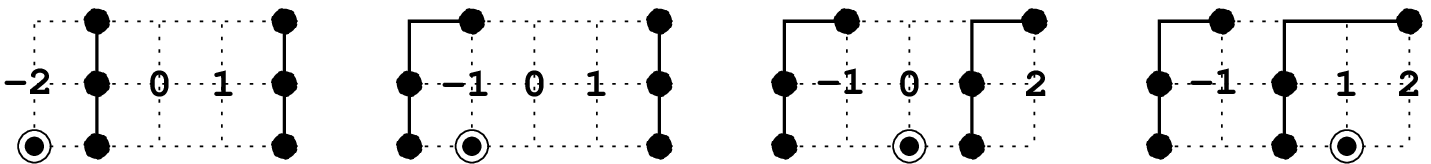,scale=0.8} 
\caption{Computing $\Hop_{-2}(s_{(31)})$ using abacus-histories.}
\label{fig:H-ex}
\end{center}
\end{figure}

For example, by drawing the objects in Figure~\ref{fig:H-ex}, we find
\begin{equation}\label{eq:H-ex1}
 \Hop_{-2}(s_{(31)})=+s_{(200)}-q^1s_{(200)}-q^2s_{(110)}+q^3s_{(110)}. 
\end{equation}
In these abacus-histories, we have included gap labels in the middle row
and circled the new beads created by $\Sop_{m+c}$. 
By making similar pictures, one can check that 
\begin{equation}\label{eq:H-ex2}
 \Hop_1(s_{(31)})=-s_{(221)}+q^1s_{(221)}+q^2s_{(320)}
   +q^2s_{(311)}+q^3s_{(410)}. 
\end{equation}
Note that these answers are neither Schur-positive nor Schur-negative.
We observe that every original bead takes two consecutive south steps
in these abacus-histories. By combining these steps into a single south step,
we can shorten the time needed for the $\Hop_m$ operator from two time
steps to one time step. We do this from now on.

We can compute $\Cop_m(s_{\mu})$ by generating exactly the same collection of
abacus-histories used for $\Hop_m(s_{\mu})$. The only difference is
that each weight $q^c$ is replaced by $q^{-c}$, and the final answer
is multiplied by the global factor $(-1/q)^{m-1}$. For example,
\begin{align*}
  \Cop_{-2}(s_{(31)}) &= -q^3s_{(200)}+q^2s_{(200)}+q^1s_{(110)}-q^0s_{(110)};
 \\ \Cop_1(s_{(31)})  &=-s_{(221)}+q^{-1}s_{(221)}+q^{-2}s_{(320)}
    +q^{-2}s_{(311)}+q^{-3}s_{(410)}. 
\end{align*}

\subsection{Abacus Version of $\Bop_m$}
\label{subsec:abacusB}

Finally, we describe how to compute $\Bop_m(s_{\mu})$ using abacus-histories.
Since $\Bop_m=\CO(\Hop_m)=\omega\circ\Hop_m\circ\, \omega$, one approach is to
calculate $\Hop_m(s_{\mu'})$ as described earlier, then conjugate all
partitions in the resulting Schur expansion. 
For example, using~\eqref{eq:H-ex1} and~\eqref{eq:H-ex2}, we compute:
\begin{align*}
 \Bop_{-2}(s_{(211)}) &=
 +s_{(11)}-q^1s_{(11)}-q^2s_{(2)}+q^3s_{(2)};  
\\ \Bop_{1}(s_{(211)}) &=
 -s_{(32)}+q^1s_{(32)}+q^2s_{(221)} +q^2s_{(311)}+q^3s_{(2111)}. 
\end{align*}

Alternatively, we can use the formula
\[ \Bop_m=\sum_{d\geq 0} q^d\CO(\Sop_{m+d})\circ e_d^{\perp} \]
proved in Section~\ref{subsec:alg-conj-omega}. Starting with the 
doubly-infinite abacus for $\mu$, do the following steps in all possible ways.
Choose an integer $d\geq 0$. In time step $1$, apply $e_d^{\perp}$
by moving $d$ distinct beads one step southwest while the remaining beads
move one step south with no collisions. In time step $2$,
apply $\CO(\Sop_{m+d})$ by destroying the bead with label $m+d$ (if any).
The Schur function corresponding to the new abacus is weighted by $q^d$
times the sign of the destructed bead.

\section{Abacus-History Models for $H_{\alpha}$, 
 $C_{\alpha}$, $B_{\alpha}$, etc.}
\label{sec:BCH}

\subsection{Combinatorial Formulas}
\label{subsec:BCH-formulas}

Now that we have abacus-history interpretations for the 
operators $\Sop_m$, $\Hop_m$, $\Cop_m$, $\Bop_m$, $h_m^{\perp}$, etc.,
we can build abacus-history models giving the Schur expansion 
when any finite sequence of these operators is applied to any Schur function.
We simply concatenate the diagrams for the individual operators
in all possible ways and sum the signed, weighted Schur functions
corresponding to the final abaci. We illustrate this process here by
describing combinatorial formulas for $H_{\alpha}$, $C_{\alpha}$, 
$B_{\alpha}$, and the analogous symmetric functions 
$\Hop_{\alpha}(s_{\mu})$, $\Cop_{\alpha}(s_{\mu})$, and
$\Bop_{\alpha}(s_{\mu})$ obtained by replacing $1$ by $s_{\mu}$ 
in~\eqref{eq:Halpha}, \eqref{eq:Calpha}, and~\eqref{eq:Balpha}.

Fix a sequence of integers $\alpha=(\alpha_1,\alpha_2,\ldots,\alpha_L)$.
We compute $H_{\alpha}=\Hop_{\alpha_1}\circ\cdots\circ \Hop_{\alpha_L}(1)$
using abacus-histories that take $L$ time steps. We start with an empty
abacus (corresponding to the input $s_{(0)}=1$) where the gap in each
position $i\geq 0$ has label $i$.  In time step 1, we cannot
move any beads west, so we create a new bead in the gap labeled $\alpha_L$.
In time step 2, we choose $c_2\geq 0$, move the lone bead west $c_2$ steps 
and south once, then create a new bead in the gap now labeled 
$\alpha_{L-1}+c_2$.  In time step 3, we choose $c_3\geq 0$, move the two
beads west a total of $c_3$ steps and south, then create a new bead
in the gap now labeled $\alpha_{L-2}+c_3$. And so on. If at any stage
there is no gap with the required label, then that particular diagram
disappears and contributes zero to the answer. If the diagram survives
through all $L$ time steps, then its final abacus contributes a Schur
function weighted by $q^{c_2+c_3+\cdots+c_L}$ times the signs arising
from all the bead creation steps. Thus, the final power of $q$ is the
total number of west steps taken by all the beads, while the final
sign is $-1$ raised to the total number of beads to the right of newly
created beads in all time steps. The computation for $\Hop_{\alpha}(s_{\mu})$
is the same, except now we start with the abacus for $\mu$ instead of
an empty abacus. Here we might move some beads $c_1\geq 0$ steps west
in the first time interval, leading to the creation of a new bead
in the gap now labeled $\alpha_L+c_1$. When $\mu=0$ we must have $c_1=0$.

The following remarks can aid in generating the diagrams for $H_{\alpha}$.
The \emph{default starting positions} for the new beads are
$\alpha_L,\alpha_{L-1}+1,\alpha_{L-2}+2,\ldots,\alpha_1+L-1$. 
These are the positions (not gap labels) on the initial abacus 
where new beads would appear if all $c_i$ were zero. 
(This follows since gap labels coincide with position numbers on
an empty abacus, and each bead creation decrements all current gap labels.) 
The \emph{actual starting positions} for the new beads are
\[ \alpha_L+c_1,\alpha_{L-1}+1+c_2,\alpha_{L-2}+2+c_3,\ldots,\alpha_1+L+c_L; \]
these are obtained by moving each default starting position east 
by the number of west steps in the preceding row. 

\begin{example}
Figure~\ref{fig:H123} uses abacus-histories to compute
$H_{(123)}=\Hop_1(\Hop_2(\Hop_3(1)))$.  For brevity, we omit the top
rows, which have no beads in any positive position.  
All three new beads have
default starting position $3$.  There are 18 objects in all, but we
find two pairs of objects that cancel (C with F, and I with N).  
We are left with
\begin{align*}
 H_{(123)}
  = &q^8s_{(600)}+(q^6+q^7)s_{(510)}+(q^6+q^5+q^4-q^3)s_{(420)}
 \\ &+q^5s_{(411)}+q^5s_{(330)}+(q^4+q^3-q^2)s_{(321)}+(q^2-q)s_{(222)}.  
\end{align*} 
\end{example}

\begin{figure}
\begin{center}
\epsfig{file=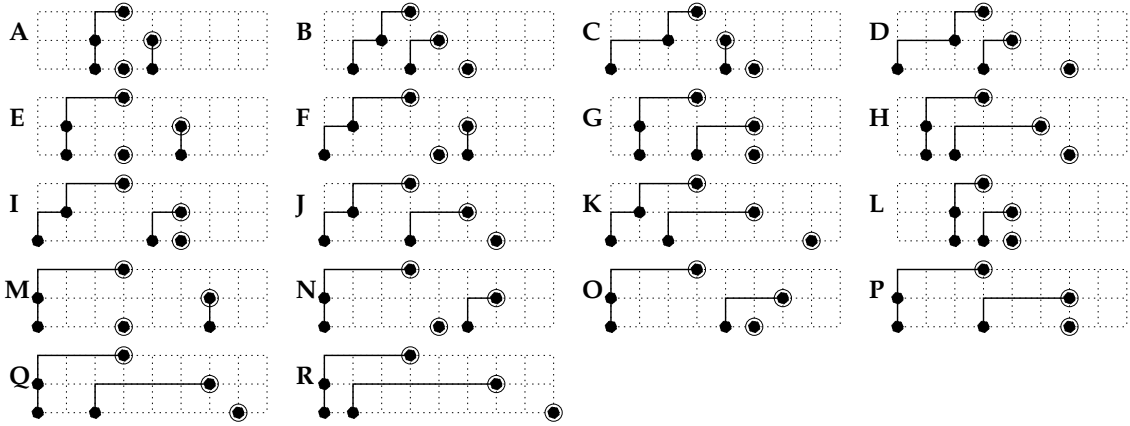,scale=0.6} 
\caption{Computing $H_{(123)}$ using abacus-histories.}
\label{fig:H123}
\end{center}
\end{figure}

We compute $C_{\alpha}$ (resp. $\Cop_{\alpha}(s_{\mu})$) by making
exactly the same abacus-histories used for $H_{\alpha}$
(resp. $\Hop_{\alpha}(s_{\mu})$). The only difference is that
the $q$-weight of each abacus-history is now $q^{-(c_1+\cdots+c_L)}$ 
and the final answer must be multiplied by $(-1/q)^{|\alpha|-\ell(\alpha)}$,
where $|\alpha|=\alpha_1+\cdots+\alpha_L$ and $\ell(\alpha)=L$.

Since $\Bop_m=\CO(\Hop_m)$ for every integer $m$, we can compute
$\Bop_{\alpha_1}\circ\cdots\circ\Bop_{\alpha_L}(s_{\mu})$
by applying $\Hop_{\alpha}$ to $s_{\mu'}$ as described above, then
conjugating all partitions in the resulting Schur expansion.
Alternatively, we can chain together the moves for the $\Bop_{\alpha_i}$
described at the end of Section~\ref{subsec:abacusB}.
Beware that $B_{\alpha}$ (as defined in~\cite{HMZ12}) is found
by starting with $1$ and applying $\Bop_{\alpha_1}$, 
$\Bop_{\alpha_2}$, $\ldots$, $\Bop_{\alpha_L}$ in this order.

\begin{remark}
Python and SageMath code for computing with creation operators
and abacus-histories is available on the second author's 
website~\cite{GSW-code}. For $k=2,3,4,5,6$, the computation
of $H_{\alpha}$ for $\alpha=(3^k)$ involves $4$, $27$, $338$,
$6262$, and $168312$ abacus-histories, respectively. 
The time required (in seconds) was $0.04$, $0.05$, $0.08$,
$0.68$, and $24.61$, respectively. As another example,
$C_{(5,1,4,2,3,1)}$ has $16682$ terms and takes 5 seconds to compute.
\end{remark}

\subsection{Application to Three-Row Hall--Littlewood Polynomials}
\label{subsec:HLpoly}

As we have seen, for general $\alpha$ the Schur expansion of $H_{\alpha}$
has a mixture of positive and negative terms. But for partitions
$\nu$, a celebrated theorem of Lascoux and Sch\"utzenberger~\cite{LS-HL}
shows that all Schur coefficients of $H_{\nu}$ are polynomials in $q$ with 
nonnegative integer coefficients (the \emph{Kostka--Foulkes polynomials}).
According to this theorem, the 
coefficient of $s_{\lambda}$ in $H_{\nu}$ is the sum of $q^{\charge(T)}$
over all semistandard Young tableaux $T$ of shape $\lambda$ and content
$\nu$, where charge is computed from $T$ by an explicit combinatorial
rule (see~\cite{Butler},~\cite[pg. 242]{Mac},
and~\cite[Sec. 1.7]{manivel} for more details).
Kirillov and Reshetikhin~\cite{KR1,KR2} gave another combinatorial formula for
the Kostka--Foulkes polynomials as a sum over \emph{rigged configurations}
weighted by a suitable charge statistic. A detailed survey of combinatorial
formulas for Hall--Littlewood polynomials and their applications to 
representation theory may be found in~\cite{DLT}.

Our combinatorial formula for $H_{\alpha}$ based on abacus-histories 
holds for general integer sequences $\alpha$, but it is not manifestly
Schur-positive when $\alpha$ happens to be an integer partition. On the
other hand, our $q$-statistic (the total number of west steps in the
object) is much simpler to work with compared to the complicated charge
statistic on tableaux. As an application of our abacus-history model,
we now give a simple bijective proof of the Schur-positivity of
$H_{\nu}$ when $\nu$ is a partition with at most three parts.

If $\nu$ has only one part, then it is immediate that
$H_{\nu}=\Hop_{\nu_1}(1)=s_{\nu_1}$. Next suppose $\nu=(\nu_1\geq \nu_2)$ 
has two parts. When computing $H_{\nu}$ via abacus-histories,
the default starting positions of the two beads are $\nu_2$ and 
$\nu_1+1>\nu_2$. When the second bead is created, the first bead
has moved to a column $\nu_2-c_2$ for some $c_2\geq 0$, and the
second bead actually starts in column $\nu_1+1+c_2>\nu_2-c_2$.
This means that all abacus-histories for $H_{\nu}$ have positive sign, 
and our formula is already Schur-positive in this case.

Now let $\nu=(\nu_1\geq\nu_2\geq\nu_3)$ have three parts. As before,
since $\nu_2\geq\nu_3$, the second bead always gets created to the
right of the first bead's current column. For similar reasons, the
bead created third must start to the right of the first bead in the lowest row.
But it is possible that this third bead appears to the left of the
second bead's position in that row, leading to a negative object 
that must be canceled.

We cancel these objects using the involution suggested in 
Figure~\ref{fig:Hcancel}. Given a negative object as just described,
let $k$ be the distance between the new bead in the lowest row and
the bead to its right. Let $a,b,c\geq 0$ count west steps
as shown on the left in the figure, so bead $1$'s path
is $\W^a\Sst\W^b\Sst$ and bead $2$'s path is $\W^c\Sst$. The involution acts
by replacing $a$ by $a-k$ and $b$ by $b+k$, which causes bead $2$'s
actual starting position to move left $k$ columns and bead $3$'s actual
starting position to move right $k$ columns. This action matches the given
negative object with a positive object having the same number of west steps 
(hence the same $q$-power) and the same bead positions on the final abacus 
(hence the same Schur function).

\begin{figure}
\begin{center}
\epsfig{file=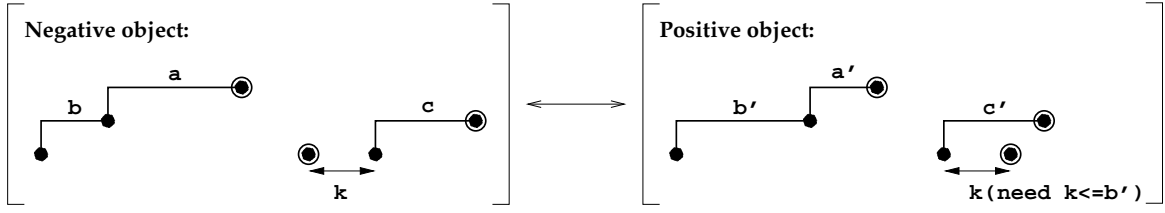,scale=0.7}
\caption{An involution on abacus-histories with three beads.}
\label{fig:Hcancel}
\end{center}
\end{figure}

Going the other way, consider a positive object (as shown on the
right in Figure~\ref{fig:Hcancel}) where the new bead in the bottom
row is $k$ columns to the right of the second bead, bead $1$'s path is 
$\W^{a'}\Sst\W^{b'}\Sst$, and bead $2$'s path is $\W^{c'}\Sst$. If $k\leq b'$,
then the involution acts by replacing $a'$ by $a'+k$ and $b'$ by $b'-k$,
causing the other two paths to switch places as before. If $k>b'$,
then this positive object is a fixed point of the involution.

To see that this involution really works, we must check a few items.
Fix an arbitrary negative object with notation as in Figure~\ref{fig:Hcancel}.
First, we must show $k\leq a$, since $a'$ is not allowed to be negative.
On one hand, the actual starting position of the new bead in the bottom row
is $\nu_1+2+b+c$. On the other hand, the bead created in the middle row
starts in position $\nu_2+1+a$ and ends in the bottom row
in position $\nu_2+1+a-c$. Therefore,
\begin{equation}\label{eq:k-formula} 
 k=(\nu_2+1+a-c)-(\nu_1+2+b+c)=a-(b+2c+1+\nu_1-\nu_2). 
\end{equation}
Since $b,c\geq 0$ and $\nu_1\geq\nu_2$, the quantity subtracted from $a$
is strictly positive, so we actually have $k<a$. 

Second, we show that applying the involution to a negative object
does not cause the first two beads to collide in the middle row.  
After replacing $a$ by $a'=a-k$, the first bead moves from the top row
to the middle row in position $\nu_3-a'=\nu_3-a+k$. The bead created
in the middle row now starts in position $\nu_2+1+a'=\nu_2+1+a-k$
and moves left $c'=c$ steps to position $\nu_2+1+a-k-c$ before moving
down to the bottom row. Therefore, to avoid a bead collision, we need
$\nu_3-a+k<\nu_2+1+a-k-c$, or equivalently $2(a-k)>c+\nu_3-\nu_2-1$.  
Using~\eqref{eq:k-formula} to substitute for $a-k$, we need
$2(b+2c+1+\nu_1-\nu_2)>c+\nu_3-\nu_2-1$, which rearranges to
$2b+3c+3+2\nu_1-\nu_2-\nu_3>0$. 
This is true, since $b,c\geq 0$ and $\nu_1\geq\nu_2\geq\nu_3$.

Third, we show that applying the involution twice restores the original object.
This follows since the value of $k$ is the same for the object and its image,  
and $b'=b+k$ automatically satisfies $k\leq b'$. We have now proved that our
involution is well-defined and cancels all negative objects.

By analyzing the fixed points of this involution more closely, we can
prove the following explicit formula for the Schur coefficients of
$H_{\nu}$ when $\nu$ is a three-part partition.

\begin{theorem}
For all partitions $\nu=(\nu_1\geq\nu_2\geq \nu_3>0)$
and $\lambda=(\lambda_1\geq\lambda_2\geq\lambda_3\geq 0)$ 
such that $|\lambda|=|\nu|$, the coefficient of $s_{\lambda}$ in $H_{\nu}$ is 
\begin{equation}\label{eq:schur-coef}
 \sum_{b=0}^{\min(\lambda_1-\lambda_2,\lambda_2-\lambda_3,\nu_3-\lambda_3,
   \lambda_1-\nu_1)} q^{\nu_3-\lambda_3+\lambda_1-\nu_1-b}. 
\end{equation}
\end{theorem}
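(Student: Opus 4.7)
The plan is to enumerate the fixed points of the involution depicted in Figure~\ref{fig:Hcancel} and read off the contribution of each to the coefficient of $s_{\lambda}$ in $H_{\nu}$. Every abacus-history for $H_{\nu}=\Hop_{\nu_1}\circ\Hop_{\nu_2}\circ\Hop_{\nu_3}(1)$ is encoded by a triple $(a,b,c)$ of nonnegative integers: in the notation of Figure~\ref{fig:Hcancel}, bead~$1$'s path is $\W^{a}\Sst\W^{b}\Sst$ and bead~$2$'s path is $\W^{c}\Sst$. The resulting final bead positions are $\nu_3-a-b$, $\nu_2+1+a-c$, and $\nu_1+2+b+c$; the $q$-weight is $q^{a+b+c}$; and the sign is positive iff bead~$3$ lands strictly to the right of bead~$2$.

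First, the analysis already carried out in the lead-up to Figure~\ref{fig:Hcancel} identifies the fixed points of the involution as exactly the positive objects with $k>b$, where $k=\nu_1-\nu_2+1+b+2c-a$ is the horizontal distance from bead~$2$ to bead~$3$ in the bottom row. Next, for a target partition $\lambda$ with $|\lambda|=|\nu|$, reading off the parts of $\lambda$ from the final bead positions yields the linear relations $a+b=\nu_3-\lambda_3$, $b+c=\lambda_1-\nu_1$, and $a-c=\lambda_2-\nu_2$; these are consistent exactly because $|\lambda|=|\nu|$, and their solutions form a one-parameter family $a=\nu_3-\lambda_3-b$, $c=\lambda_1-\nu_1-b$ indexed by an integer $b$.

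The heart of the argument is to show that the valid range of $b$ is precisely $0\leq b\leq\min(\lambda_1-\lambda_2,\lambda_2-\lambda_3,\nu_3-\lambda_3,\lambda_1-\nu_1)$, with each of the four terms arising from one independent constraint. The bounds $b\leq\nu_3-\lambda_3$ and $b\leq\lambda_1-\nu_1$ come directly from $a\geq 0$ and $c\geq 0$. The bound $b\leq\lambda_2-\lambda_3$ comes from the requirement inherent in the $h^{\perp}$-step of $\Hop_{\nu_1}$ that bead~$2$ not move through bead~$1$'s row-$2$ position while going west, which algebraically reads $c\leq\nu_2-\nu_3+2a$; substituting the parametrization and using $|\lambda|=|\nu|$ converts this to $b\leq\lambda_2-\lambda_3$. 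Finally, the fixed-point condition $k>b$ simplifies remarkably: a short computation substituting the parametrization into the formula for $k$ and using $|\lambda|=|\nu|$ shows that $k=\lambda_1-\lambda_2+1$, independent of $b$, so $k>b$ is equivalent to $b\leq\lambda_1-\lambda_2$. As a bonus, this identity shows $k\geq 1$ automatically for every valid $\lambda$, so every triple $(a,b,c)$ in the claimed range genuinely corresponds to a positive fixed point with no bead collisions.

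With the range of $b$ established, the formula follows by observing that the $q$-weight of the fixed point indexed by $b$ is $q^{a+b+c}=q^{\nu_3-\lambda_3+\lambda_1-\nu_1-b}$, and summing this geometric-looking contribution over $b$ from $0$ to the claimed minimum yields~\eqref{eq:schur-coef}. The main obstacle will be the careful derivation of all four constraints; in particular, the bead-collision constraint giving $b\leq\lambda_2-\lambda_3$ is the easiest to overlook, since it is implicit in the definition of $h^{\perp}$ rather than visible in the three-row picture.
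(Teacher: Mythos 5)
Your proof is correct and follows essentially the same route as the paper's: both enumerate the positive fixed points of the involution, parametrize them by $b$ via the same linear relations $a=\nu_3-\lambda_3-b$, $c=\lambda_1-\nu_1-b$, derive the identical four upper bounds on $b$ (from $a\geq 0$, $c\geq 0$, the middle-row non-collision condition, and the fixed-point condition $k>b$ with $k=\lambda_1-\lambda_2+1$), and sum the weight $q^{\nu_3-\lambda_3+\lambda_1-\nu_1-b}$. There is no substantive difference in approach or any gap to report.
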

\begin{proof}
We fix $\lambda,\nu$ as in the theorem statement and enumerate the
positive fixed points of the involution. To obtain an uncanceled term 
$s_{\lambda}$ in the computation of $H_{\nu}$, the beads in the abacus-history 
must move as follows. The first bead starts in
position $\nu_3$ and ends in position $\lambda_3$ after moving along
some path $\W^a\Sst\W^b\Sst$. 
(We switch here to unprimed parameters for the positive fixed point.) 
The second bead starts in position
$\nu_2+1+a$ and ends in position $\lambda_2+1$ (since all negative objects
cancel) after moving along some path $\W^c\Sst$. 
The third bead starts in position $\nu_1+2+b+c$
and ends (without moving) in position $\lambda_1+2$.
We deduce that $\lambda_3=\nu_3-a-b$, $\lambda_2+1=\nu_2+1+a-c$,
and $\lambda_1+2=\nu_1+2+b+c$.  Since $\lambda$ and $\nu$ are fixed, 
the entire object is uniquely determined once we select the value of $b$. 
Eliminating $a$ and $c$, we see that the $q$-weight of the object is
\[ q^{a+b+c}=q^{\nu_3-\lambda_3+\lambda_1-\nu_1-b}. \] 

Which choices of $b$ are allowed? We certainly need $b\geq 0$,
as well as $a\geq 0$ and $c\geq 0$. Using $a=\nu_3-\lambda_3-b$
and $c=\lambda_1-\nu_1-b$, the conditions on $a$ and $c$ are
equivalent to $b\leq\nu_3-\lambda_3$ and $b\leq\lambda_1-\nu_1$.
We also need the first two beads not to collide in the middle row,
so we need $\nu_3-a<\nu_2+1+a-c$. This condition is equivalent to
$b+\lambda_3<\lambda_2+1$ and, hence, to $b\leq \lambda_2-\lambda_3$. 
Finally, letting $k=(\lambda_1+2)-(\lambda_2+1)$ be the distance
between the two rightmost beads in the bottom row, we need $k>b$
for this positive object to be a fixed point of the involution.
This condition rearranges to $b\leq\lambda_1-\lambda_2$. Combining
the five conditions on $b$ leads to the summation in the theorem statement.
\end{proof}

It is also possible to derive~\eqref{eq:schur-coef} starting from
the charge formula for the Schur expansion of $H_{\nu}$. But such a 
proof is quite tedious, requiring a messy case analysis due to the
complicated definition of the charge statistic.

One might ask if the involution for three-row shapes extends
to partitions $\nu$ with more than three parts. While the same involution
certainly applies to the first three rows of larger abacus-histories,
more moves are needed to eliminate all negative objects. Even in the
case of four-row shapes, the new cancellation moves are much more intricate
than the move described here. We leave it as an open question to reprove
the Schur-positivity of $H_{\nu}$ for all partitions $\nu$ via an explicit
involution on abacus-histories. It would also be interesting to find a
specific weight-preserving bijection between the set of fixed points for 
such an involution and the set of semistandard tableaux.

\section{Appendix: Proofs of Two Plethystic Formulas}
\label{sec:pleth-prf} 

This appendix proves the equivalence of the plethystic definitions
and the algebraic definitions of $\Sop_m$ and $\Cop_m$. We require
just three basic plethystic identities (see~\cite{expose} 
for a detailed exposition of plethystic notation including proofs
of these facts).  First, for any alphabets $A$ and $B$ and
any partition $\mu$, we have the plethystic addition rule
\[ s_{\mu}[A+B]=\sum_{\nu:\,\nu\subseteq\mu}
   s_{\nu}[A]s_{\mu/\nu}[B]. \]
Second, for formal variables $q$ and $z$ and partitions $\nu\subseteq\mu$,
\[ s_{\mu/\nu}[q/z]= \begin{cases}
 (q/z)^{|\mu/\nu|} & \mbox{ if $\mu/\nu$ is a horizontal strip; }
\\ 0 & \mbox{ otherwise.}\end{cases} \]
Third, for all $\nu\subseteq\mu$,
\[ s_{\mu/\nu}[-1/z]
  =(-1)^{|\mu/\nu|}s_{\mu'/\nu'}[1/z]
  =\begin{cases}
 (-1/z)^{|\mu/\nu|} & \mbox{ if $\mu/\nu$ is a vertical strip; }
\\ 0 & \mbox{ otherwise.}\end{cases} \]
Recall that $\HS(c)$ (resp. $\VS(c)$) is the set of horizontal
(resp. vertical) strips with $c$ cells.

We prove the equivalence of the definitions~\eqref{eq:defS-pleth}
and~\eqref{eq:defS-alg} for $\Sop_m$ by showing that the two formulas
have the same action on the Schur basis. Taking $P=s_{\mu}$ 
in~\eqref{eq:defS-pleth} and using the rules above, we compute:
\begin{align*}
 \Sop_m(s_{\mu}) 
&= \left.s_{\mu}[X-(1/z)]\sum_{k\geq 0} h_kz^k\right|_{z^m}
=\left.\sum_{\nu\subseteq\mu}
        s_{\nu}[X]s_{\mu/\nu}[-1/z]\sum_{k\geq 0} h_kz^k\right|_{z^m}
\\&=\left.\sum_{c\geq 0}\sum_{\substack{\nu\subseteq\mu:\\\mu/\nu\in\VS(c)}}
   (-1/z)^{c}\sum_{k\geq 0} s_{\nu}h_kz^k\right|_{z^m}
=\sum_{c\geq 0}\sum_{\substack{\nu\subseteq\mu:\\\mu/\nu\in\VS(c)}}
   (-1)^{c} s_{\nu}h_{m+c}
\\&=\sum_{c\geq 0} (-1)^c \M{h_{m+c}}(e_c^{\perp}(s_{\mu})).  
\end{align*}
This agrees with~\eqref{eq:defS-alg}.

Now we prove the equivalence of~\eqref{eq:defH-pleth}
and~\eqref{eq:defH-alg}. Applying~\eqref{eq:defH-pleth} to $P=s_{\mu}$ gives:
\begin{align*}
 \Hop_m(s_{\mu}) 
&= \left.s_{\mu}[(X-1/z)+q/z]\sum_{k\geq 0} h_kz^k\right|_{z^m}
=\left.\sum_{\nu\subseteq\mu}
        s_{\nu}[X-1/z]s_{\mu/\nu}[q/z]\sum_{k\geq 0} h_kz^k\right|_{z^m}
\\&=\left.\sum_{c\geq 0}\sum_{\substack{\nu\subseteq\mu:\\\mu/\nu\in\HS(c)}} 
 (q/z)^c s_{\nu}[X-1/z]\sum_{k\geq 0} h_kz^k\right|_{z^m}
\\&=\left.\sum_{c\geq 0}q^c\sum_{\substack{\nu\subseteq\mu:\\\mu/\nu\in\HS(c)}} 
  s_{\nu}[X-1/z]\sum_{k\geq 0} h_kz^k\right|_{z^{m+c}}
=\sum_{c\geq 0}q^c\sum_{\substack{\nu\subseteq\mu:\\\mu/\nu\in\HS(c)}} 
   \Sop_{m+c}(s_{\nu})
\\&=\sum_{c\geq 0} q^c \Sop_{m+c}\left(
\sum_{\nu\subseteq\mu:\ \mu/\nu\in\HS(c)} s_{\nu}\right)
= \sum_{c\geq 0} q^c \Sop_{m+c}(h_c^{\perp}(s_{\mu})).
\end{align*}
This agrees with~\eqref{eq:defH-alg}.

\end{document}